\theoremstyle{plain}
\newtheorem{thm}{Theorem}[section]
\newtheorem{lem}[thm]{Lemma}
\newtheorem{defn}[thm]{Definition}
\newtheorem{rmkk}[thm]{Remark}
\newtheorem{hypp}[thm]{Hypotheses}
\newcommand{\enter}{\bigskip}
\begin{document}
\thispagestyle{empty}

 \author{{Ankik Kumar Giri $^{1,3}$\footnote{Corresponding author. Tel +43 (0)3842-402-1706; Fax +43 (0)3842-402-1702 \newline{\it{${}$ \hspace{.3cm} Email address: }}ankik-kumar.giri@unileoben.ac.at, ankik.giri@ovgu.de},
 Philippe Lauren\c{c}ot$^{2}$, Gerald Warnecke$^{3}$}\vspace{.2cm}\\
\footnotesize \it $^1$ Institute for Applied Mathematics, Montan University Leoben,\\
\footnotesize \it Franz Josef Stra{\ss}e 18, A-8700 Leoben, Austria \vspace{.2cm} \\
\footnotesize \it $^2$ Institut de Math\'ematiques de Toulouse, CNRS UMR~5219, Universit\'e de Toulouse, \vspace{-.2cm}\\
 \footnotesize \it F--31062 Toulouse Cedex 9, France, \vspace{.2cm} \\
\footnotesize \it $^3$Institute for Analysis and Numerics, Otto-von-Guericke University Magdeburg, \vspace{-.2cm}\\
 \footnotesize \it Universit\"{a}tsplatz 2, D-39106 Magdeburg, Germany \vspace{.2cm} \\
  }

\title{Weak Solutions to the Continuous Coagulation Equation with Multiple Fragmentation}
\maketitle

\hrule \vskip 8pt

\begin{quote}
{\small {\em\bf Abstract} The existence of weak solutions to the continuous coagulation equation
with multiple fragmentation is shown for a class of unbounded coagulation and fragmentation kernels, the fragmentation kernel having possibly a singularity at the origin. This result extends previous ones where either boundedness of the coagulation kernel or no singularity at the origin for the fragmentation kernel were assumed.\enter
}
\end{quote}

{\bf Keywords:} Coagulation; Multiple Fragmentation;  Unbounded kernels; Existence; Weak compactness

\vskip 10pt \hrule

\section{Introduction}\label{existintroduction1}

The continuous coagulation and multiple fragmentation equation describes the evolution of the number density $f=f(x,t)$ of particles of volume $x\geq 0$ at time $t\geq 0$ and reads
\begin{align}\label{cfe1}
\frac{\partial f(x,t)}{\partial t}  = & \frac{1}{2}\int_{0}^{x} K(x-y,y)f(x-y,t)f(y,t)dy - \int_{0}^{\infty} K(x,y)f(x,t)f(y,t)dy\nonumber\\  &+ \int_{x}^{\infty} b(x,y)S(y)f(y,t)dy-S(x)f(x,t),
\end{align}
with
\begin{align}\label{in1}
f(x,0) = f_{0}(x)\geq 0 .
\end{align}
The first two terms on the right-hand side of (\ref{cfe1}) accounts for the formation and disappearance of particles as a result of coagulation events and the coagulation kernel $K(x,y)$ represents the rate at which particles of volume $x$ coalesce with particles of volume $y$. The remaining two terms on the right-hand side of (\ref{cfe1}) describes the variation of the number density resulting from fragmentation events which might produce more than two daughter particles, and the breakage function $b(x,y)$ is the probability density function for the formation of particles of volume $x$ from the particles of volume $y$. Note that it is non-zero only for $x<y$. The selection function $S(x)$ describes the rate at which particles of volume $x$ are selected to fragment. The selection function $S$ and breakage function $b$ are defined in terms of the multiple-fragmentation kernel $\Gamma$ by the identities
\begin{align}\label{Selection rate defi1}
S(x)=\int_{0}^{x}\frac{y}{x}\ \Gamma(x,y)dy,\ \ \ \ b(x,y)=\Gamma(y,x)/ S(y).
\end{align}
The breakage function is assumed here to have the following properties
\begin{align}\label{N1}
\int_{0}^{y}b(x,y)dx = N< \infty, \ \ \text{for all}\ \ y>0,\ \ \ \ \ b(x,y)=0\ \ \text{for}\ \ x> y,
\end{align}
and
\begin{align}\label{mass1}
\int_{0}^{y}xb(x,y)dx = y\ \ \text{for all}\ \ y>0.
\end{align}
The parameter $N$ represents the number of fragments obtained from the breakage of particles of volume $y$ and is assumed herein to be finite and independent of $y$. This is however inessential for the forthcoming analysis, see Remark~\ref{re:N} below. As for the condition (\ref{mass1}), it states that the total volume of the fragments resulting from the splitting of a particle of volume $y$ equals $y$ and thus guarantees that the total volume of the system remains conserved during fragmentation events.\\
The existence of solutions to coagulation-fragmentation equations has already been the subject of several papers which however are mostly devoted to the case of binary fragmentation, that is, when the fragmentation kernel $\Gamma$ satisfies the additional symmetry property $\Gamma(x+y,y)=\Gamma(x+y,x)$ for all $(x,y)\in ]0,\infty[^2$, see the survey \cite{LM:2004} and the references therein. The coagulation-fragmentation equation with multiple fragmentation has received much less attention over the years though it is already considered in the pioneering work \cite{Melzak:1957}, where the existence and uniqueness of solutions to (\ref{cfe1})-(\ref{in1}) are established for bounded coagulation and fragmentation kernels $K$ and $\Gamma$. A similar result was obtained later on in \cite{McLaughlin:1997II} by a different approach. The boundedness of $\Gamma$ was subsequently relaxed in \cite{Lamb:2004} where it is only assumed that $S$ grows at most linearly, but still for a bounded coagulation kernel. Handling simultaneously unbounded coagulation and fragmentation kernels turns out to be more delicate and, to our knowledge, is only considered in \cite{Laurencot:2000} for coagulation kernels $K$ of the form $K(x,y)=r(x) r(y)$ with no growth restriction on $r$ and a moderate growth assumption on $\Gamma$ (depending on $r$) and in \cite{GIRI:2010EXT} for coagulation kernels satisfying $K(x,y)\le \phi(x)\phi(y)$ for some sublinear function $\phi$ and a moderate growth assumption on $\Gamma$ (see also \cite{PhL:2002} for the existence of solutions for the corresponding discrete model). Still, the fragmentation kernel $\Gamma$ is required to be bounded near the origin in \cite{GIRI:2010EXT,Laurencot:2000} which thus excludes kernels frequently encountered in the literature such as $\Gamma(y,x)=(\alpha+2)\ x^\alpha\ y^{\gamma-(\alpha+1)}$ with $\alpha>-2$ and $\gamma\in\mathbb{R}$ \cite{McGradyZiff:1987}.

The purpose of this note is to fill (at least partially) this gap and establish the existence of weak solutions to (\ref{cfe1}) for simultaneously unbounded coagulation and fragmentation kernels $K$ and $\Gamma$, the latter being possibly unbounded for small and large volumes. More precisely, we make the following hypotheses on the coagulation kernel $K$, multiple-fragmentation kernel $\Gamma$, and selection rate $S$.
\begin{hypp}\label{hyp1}
(H1) $K$ is a non-negative measurable function on $[0,\infty[ \times [0,\infty[$ and is symmetric, i.e. $K(x,y)=K(y,x)$ for all $x,y \in ]0,\infty[$,\\
\\
(H2) $K(x,y)\leq \phi(x)\phi(y)$ for all $x,y\in ]0,\infty[$ where $\phi(x)\leq k_1(1+x)^{\mu}$ for some $0\leq \mu< 1$ and constant $k_1>0$.\\
\\
(H3) $\Gamma$  is a non-negative measurable function on $]0,\infty[ \times ]0,\infty[$ such that $\Gamma(x,y)=0$ if $0<x<y$. Defining $S$ and $b$ by (\ref{Selection rate defi1}), we assume that $b$ satisfies (\ref{mass1}) and there are $\theta\in [0,1[$ and two non-negative functions $k:]0,\infty[\to [0,\infty[$ and $\omega:]0,\infty[^2\to [0,\infty[$ such that, for each $R\geq 1$:\\
\\
(H4) we have $\Gamma(y,x) \leq k(R)\ y^{\theta}$ for  $y>R$ and $x\in ]0,R[$,\\
\\
(H5) for $y \in ]0,R[$ and any measurable subset $E$ of $]0,R[$, we have
\begin{align*}
\int_{0}^{y} \mathds{1}_E(x) \Gamma(y,x)dx \leq \omega(R,|E|), \ \ y \in ]0,R[,
\end{align*}
where $|E|$ denotes the Lebesgue measure of $E$, $\mathds{1}_E$ is the indicator function  of $E$ given by
\begin{equation*}
\mathds{1}_E(x):=\begin{cases}
1\ \, & \text{if}\ x\in E, \\
0\ \, &  \text{if}\ x\notin E,
\end{cases}
\end{equation*}
and we assume in addition that
\begin{align*}
\lim_{\delta \to 0} \omega(R, \delta)=0,
\end{align*}
\\
(H6) $S \in L^{\infty}]0,R[$.
\end{hypp}

We next introduce the functional setting which will be used in this paper: define the Banach space $X$ with norm $\|\cdot\|$ by
\begin{align*}
X=\{f\in L^1(0,\infty):\|f\|< \infty\}\ \ \mbox{where}\ \ \|f\|=\int_{0}^{\infty}(1+x)|f(x)|dx,
\end{align*}
together with its positive cone
\begin{align*}
 X^+=\{f\in X: f\geq 0 \ \ a.e.\}.
 \end{align*}
For further use, we also define the norms
\begin{align*}
\|f\|_x=\int_{0}^{\infty}x|f(x)|dx \ \ \text{and}\ \ \|f\|_1=\int_{0}^{\infty}|f(x)|dx, \qquad f\in X.
\end{align*}

\medskip

The main result of this note is the following existence result:

\begin{thm}\label{existmain theorem1}
Suppose that (H1)--(H6) hold and assume that $f_0\in X^+$. Then (\ref{cfe1})-(\ref{in1}) has a weak solution $f$ on $]0,\infty[$ in the sense of Definition~\ref{def1} below. Furthermore, $\|f(t)\|_x\le \|f_0\|_x$ for all $t\ge 0$.
\end{thm}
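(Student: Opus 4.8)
The plan is to construct the weak solution by a standard approximation-and-compactness scheme, following the pattern of \cite{Laurencot:2000,GIRI:2010EXT} but pushing the arguments through under the weaker assumptions (H1)--(H6). First I would truncate the kernels: for each integer $n\ge 1$ set $K_n(x,y)=K(x,y)\mathds{1}_{]0,n[}(x)\mathds{1}_{]0,n[}(y)$ and, similarly, cut off $\Gamma$ (equivalently $S$ and $b$) so that the resulting selection rate $S_n$ is bounded and supported in $]0,n[$. For the truncated data $f_{0,n}=f_0\mathds{1}_{]0,n[}$ the equation \eqref{cfe1} becomes a well-posed problem in $L^1(0,n)$ with bounded kernels, for which existence, uniqueness, non-negativity and mass conservation of a solution $f_n$ are classical (this is the Melzak/McLaughlin theory, or a fixed-point argument in $L^1$). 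On $f_n$ one immediately gets the volume bound $\|f_n(t)\|_x\le\|f_{0,n}\|_x\le\|f_0\|_x$ by integrating \eqref{cfe1} against $x$ and using \eqref{mass1}, which kills the fragmentation contribution and makes the coagulation contribution vanish by symmetry; this already yields the last assertion of the theorem once weak convergence is established.

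The heart of the matter is to obtain enough compactness on $\{f_n\}$ in $L^1$ to pass to the limit. The volume bound controls tails at large $x$, but one needs (i) no concentration of mass at $x=0$ and (ii) weak-$L^1$ compactness, i.e. uniform integrability. For (i) I would test \eqref{cfe1} with a weight like $x\mapsto 1$ or, better, with a concave weight vanishing at the origin, controlling $\int_0^\infty f_n$ on bounded volume sets; here is where (H4) (the $y^\theta$, $\theta<1$, bound on $\Gamma$ away from the origin) enters, to bound the gain term $\int_x^\infty b\,S\,f_n$, since $S(y)=\int_0^y (z/y)\Gamma(y,z)\,dz\le$ something growing like $y^\theta$ for large $y$, sublinearly. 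For (ii), the uniform integrability in the volume variable, the crucial new ingredient is (H5): by a refined Dunford--Pettis / de la Vallée-Poussin argument one shows that for any measurable $E\subset]0,R[$ with $|E|$ small, $\int_E f_n(x,t)\,dx$ stays small uniformly in $n$ and in $t$ on compact time intervals — the singular fragmentation gain term $\int_x^\infty b(x,y)S(y)f_n(y)\,dy$ is controlled precisely because $\int_E \Gamma(y,x)\,dx\le\omega(R,|E|)\to 0$ as $|E|\to 0$. Combined with the superlinear moment/tail control from the volume bound, the Dunford--Pettis theorem gives weak-$L^1(0,\infty)$ relative compactness of $\{f_n(t)\}$ for each $t$; equicontinuity in time (again from \eqref{cfe1}, estimating $\|f_n(t)-f_n(s)\|$ in a weak norm using the same kernel bounds) plus a variant of the Arzelà--Ascoli / Vitali theorem then produces a subsequence converging in $C([0,T];w\text{-}L^1(0,\infty))$ to some $f\in L^\infty(0,\infty;X^+)$.

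The final step is to verify that the limit $f$ is a weak solution in the sense of Definition~\ref{def1}. This means multiplying \eqref{cfe1} by a test function, integrating, and passing to the limit term by term. The linear fragmentation terms $-S f$ and $\int_x^\infty b\,S\,f$ pass to the limit using weak $L^1$ convergence together with the uniform integrability estimate coming from (H5) to handle the region near the origin and (H4) to truncate large volumes; the selection term needs (H6) and the local boundedness of $S$. The bilinear coagulation terms are handled as in \cite{GIRI:2010EXT}: the sublinear growth $\mu<1$ in (H2) together with the $\|\cdot\|_x$ bound prevents loss of mass to infinity, and weak $L^1\times$ weak $L^1$ convergence of a product is salvaged by the standard trick of writing $f_n(x)f_n(y)$ against a bounded test kernel and exploiting a.e. convergence of suitable truncations. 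I expect the main obstacle to be exactly this interplay near $x=0$: showing that the fragmentation gain term does not destroy uniform integrability and does not create a boundary flux at the origin, which is where assumption (H5) is indispensable and where the argument genuinely departs from the earlier works that assumed $\Gamma$ bounded near $0$. A secondary technical point is establishing the time-equicontinuity in a norm weak enough to be uniform in $n$ yet strong enough to identify all limit terms.
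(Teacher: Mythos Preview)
Your proposal is correct and follows essentially the same route as the paper: truncate the kernels, use classical well-posedness for the truncated problems, establish uniform $X$-bounds and uniform integrability via (H4)--(H5) (Dunford--Pettis), prove time equicontinuity, extract a limit by the Arzel\`a--Ascoli argument of \cite{Stewart:1990I}, and pass to the limit term by term using (H2), (H4)--(H6). The only minor discrepancies are cosmetic: the paper truncates $K$ on $\{x+y<n\}$ rather than on $]0,n[\times]0,n[$, and the bound on $\int_0^1 f^n$ in the paper uses both (H5) (for $y<1$) and (H4) (for $y>1$) rather than a concave test weight, but these are inessential variations of the same argument.
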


Before giving some examples of coagulation and fragmentation kernels satisfying (H1)--(H6), we recall the definition of a weak solution to (\ref{cfe1})-(\ref{in1}) \cite{Stewart:1990I}.

\begin{defn}\label{def1} Let $T \in ]0,\infty]$. A solution $f$ of (\ref{cfe1})-(\ref{in1}) is a non-negative function $f: [0,T[\to X^+$ such that, for a.e. $x\in ]0,\infty[$ and all $t\in [0,T[$,\\
\\
      (i) $s\mapsto f(x,s)$ is continuous on $[0,T[$,\\
\\
     (ii)  the following integrals are finite\\
     \begin{align*}
     \int_{0}^{t}\int_{0}^{\infty}K(x,y)f(y,s)dyds<\infty\ \ and \ \ \int_{0}^{t}\int_{x}^{\infty}b(x,y)S(y)f(y,s)dyds<\infty,
     \end{align*}
\\
      (iii)  the function $f$ satisfies the following weak formulation of (\ref{cfe1}){-(\ref{in1})}\\
     \begin{align*}
     f(x,t)&=f_0(x)+\int_{0}^{t}\left\{ \frac{1}{2}\int_{0}^{x}K(x-y,y)f(x-y,s)f(y,s)dy \right.\nonumber\\
     &\left. -\int_{0}^{\infty}K(x,y)f(x,s)f(y,s)dy +\int_{x}^{\infty}b(x,y)S(y)f(y,s)dy - S(x)f(x,s)\right\}ds.
     \end{align*}
\end{defn}

Coming back to (H1)-(H6), it is clear that coagulation kernels satisfying $K(x,y)\le x^\mu y^\nu + x^\nu y^\mu$ for some $\mu\in [0,1[$ and $\nu\in [0,1[$ which are usually used in the mathematical literature satisfy (H1)-(H2), see also \cite{GIRI:2010EXT} for more complex choices. Let us now turn to fragmentation kernels which also fit in the classes considered in Hypotheses \ref{hyp1}.

Clearly, if we assume that
$$
\Gamma\in L^{\infty}(]0,\infty[\times ]0,\infty[)
$$
 as in \cite{GIRI:2010EXT,McLaughlin:1997II}, (H4) and (H5) are satisfied with $k=\|\Gamma\|_{L^\infty}$, $\theta=0$, and $\omega(R,\delta)=\|\Gamma\|_{L^\infty} \delta$. Now let us take
\begin{equation}
S(y) =y^\gamma \ \ \text{and}\ \ b(x,y) = \frac{\alpha+2}{y} \left( \frac{x}{y} \right)^\alpha\ \ \text{for}\ \ 0<x<y, \label{volvic}
\end{equation}
where $\gamma > 0$ and $\alpha \geq 0$, see \cite{McGradyZiff:1987,Peterson:1986}. Then
\begin{align*}
\Gamma(y,x)= (\alpha+2) x^{\alpha} y^{\gamma-(\alpha+1)} \ \ \text{for}\ \ 0<x<y.
\end{align*}
Let us first check (H5). Given $R>0$, $y\in ]0,R[$, and a measurable subset $E$ of $]0,R[$, we deduce from H\"{o}lder's inequality that
\begin{align*}
\int_{0}^{y} \mathds{1}_E(x) \Gamma(y,x)dx & = (\alpha+2) y^{\gamma-(\alpha+1)}\int_{0}^{y} \mathds{1}_E(x)x^{\alpha} dx\\
& \leq (\alpha+2) y^{\gamma-(\alpha+1)} |E|^{\frac{\gamma}{\gamma+1}} \bigg( \int_{0}^{y} x^{\alpha (\gamma+1)} dx \bigg)^{\frac{1}{\gamma+1}}\\
& \leq (\alpha+2) |E|^{\frac{\gamma}{\gamma+1}} (1+\alpha(\gamma+1))^{-\frac{1}{\gamma+1}} y^{\alpha+ \frac{1}{\gamma+1}+ \gamma-(\alpha+1)}\\
& \leq C(\alpha, \gamma) y^{\frac{{\gamma}^2}{\gamma+1}} |E|^{\frac{\gamma}{\gamma+1}}\\
& \leq C(\alpha, \gamma) R^{\frac{{\gamma}^2}{\gamma+1}} |E|^{\frac{\gamma}{\gamma+1}}.
\end{align*}
This shows that (H5) is fulfilled with $\omega(R,\delta)=C(\alpha, \gamma) R^{\frac{{\gamma}^2}{\gamma+1}} \delta^{\frac{\gamma}{\gamma+1}}$. As for (H4), for $0<x<R<y$, we write
\begin{align*}
 \Gamma(y,x) \leq (\alpha+2)\ R^\alpha\ y^{\gamma-(\alpha+1)} \leq \begin{cases}
(\alpha+2)\ R^{\gamma-1}\ \, & \text{if}\ \gamma\le \alpha+1, \\
(\alpha+2)\ R^\alpha\ y^{\gamma-(\alpha+1)}\ \, &  \text{if}\ \gamma>\alpha+1,
\end{cases}
\end{align*}
and (H4) is satisfied provided $\gamma < 2 + \alpha$ with $k(R)=(\alpha+2)\ R^{\gamma-1}$ and $\theta=0$ if $\gamma\in ]0,\alpha+1]$ and $k(R)=(\alpha+2)\ R^\alpha$ and $\theta=\gamma-(\alpha+1) \in [0,1[$ if $\gamma\in ]\alpha+1,\alpha+2[$. Therefore, Theorem~\ref{existmain theorem1} provides the existence of weak solutions to (\ref{cfe1})-(\ref{in1}) for unbounded coagulation kernels $K$ satisfying (H1)-(H2) and multiple fragmentation kernels $\Gamma$ given by (\ref{volvic}) with $\alpha\ge 0$ and $\gamma\in ]0,\alpha+2[$. Let us however mention that some fragmentation kernels which are bounded at the origin and considered in \cite{GIRI:2010EXT,Laurencot:2000} need not satisfy (H4)-(H5).

\begin{rmkk}\label{remark:r1}
While the requirement $\gamma<\alpha+2$ restricting the growth of $\Gamma$ might be only of a technical nature, the constraint $\gamma>0$ might be more difficult to remove. Indeed, it is well-known that there is an instantaneous loss of matter in the fragmentation equation when $S(x)=x^{\gamma}$ and $\gamma<0$ produced by the rapid formation of a large amount of particles with volume zero (dust), a phenomenon refered to as disintegration or shattering \cite{McGradyZiff:1987}. The case $\gamma=0$ thus appears as a borderline case.
\end{rmkk}

Let us finally outline the proof of Theorem~\ref{existmain theorem1}. Since the pioneering work \cite{Stewart:1990I}, it has been realized that $L^1$-weak compactness techniques are a suitable way to tackle the problem of existence for coagulation-fragmentation equations with unbounded kernels. This is thus the approach we use hereafter, the main novelty being the proof of the estimates needed to guarantee the expected weak compactness in $L^1$. These estimates are derived in Section~\ref{subs:wk} on a sequence of unique global solutions to truncated versions of (\ref{cfe1})-(\ref{in1}) constructed in Section~\ref{subsec trunc1}. After establishing weak equicontinuity with respect to time in Section~\ref{subs:equit}, we extract a weakly convergent subsequence in $L^1$ and finally show that the limit function obtained from the weakly convergent subsequence is actually a solution to (\ref{cfe1})-(\ref{in1}) in Sections~\ref{subs:limit} and~\ref{subs:exist}.  

\section{Existence}\label{existexistence1}

\subsection{Approximating equations}\label{subsec trunc1}

In order to prove the existence of solutions to (\ref{cfe1}-\ref{in1}), we take the limit of a sequence of approximating equations obtained by replacing the kernel $K$ and selection rate $S$ by their ``cut-off'' analogues $K_n$ and $S_n$ \cite{Stewart:1990I}, where
\begin{equation*}
K_n(x,y):=\begin{cases}
K(x,y)\ \, & \text{if}\ x+y< n, \\
\text{0}\ \, &  \text{if}\ x+y\geq n,
\end{cases}
\qquad
S_n(x):=\begin{cases}
S(x)\ \, & \text{if}\ 0<x< n, \\
\text{0}\ \, &  \text{if}\ x\geq n,
\end{cases}
\end{equation*}
for $n\ge 1$. Owing to the boundedness of $K_n$ and $S_n$ for each $n\ge 1$, we may argue as in \cite[Theorem 3.1]{Stewart:1990I}  or \cite{Walker:2002} to show that the approximating equation
\begin{align}\label{trunc1}
\frac{\partial f^n(x,t)}{\partial t}  = & \frac{1}{2}\int_{0}^{x} K_n(x-y,y)f^n(x-y,t)f^n(y,t)dy - \int_{0}^{n-x} K_n(x,y)f^n(x,t)f^n(y,t)dy\nonumber\\  &+\int_{x}^{n}b(x,y) S_n(y)f^n(y,t)dy - S_n(x)f^n(x,t),
\end{align}
with initial condition
\begin{equation}\label{trunc in1}
f^{n}_0(x):=\begin{cases}
f_0(x)\ \, & \text{if}\ 0 < x< n, \\
\text{0}\ \, &  \text{if}\ x\geq n.
\end{cases}
\end{equation}
has a unique non-negative solution $f^n\in C^1([0,\infty[;L^1]0,n[)$ such that $f^n(t)\in X^+$ for all $t\ge 0$. In addition, the total volume remains conserved for all $t\in[0,\infty[$, i.e.\ \\
\begin{align}\label{trunc mass1}
\int_{0}^{n}xf^n(x,t)dx=\int_{0}^{n}xf^n_0(x)dx.
\end{align}

From now on, we extend $f^n$ by zero to $]0,\infty[\times [0,\infty[$, i.e. we set $f^n(x,t)=0$ for $x>n$ and $t\ge 0$. Observe that we then have the identity $S_n f^n = S f^n$.

\medskip

Next, we need to establish suitable estimates in order to apply the \textit{Dunford-Pettis Theorem} \cite[Theorem 4.21.2]{Edwards:1965} and then the equicontinuity of the sequence $(f^n)_{n\in \mathds{N}}$ in time to use the \textit{Arzel\`{a}-Ascoli Theorem} \cite[Appendix A8.5]{Ash:1972}. This is the aim of the next two sections.

\subsection{ Weak compactness}\label{subs:wk}

\begin{lem}\label{compactness1}
 Assume that (H1)--(H6) hold and fix $T>0$. Then we have: \\
(i) There is $L(T)>0$ (depending on $T$) such that
\begin{align*}
\int_{0}^{\infty}(1+x)f^n(x,t)dx\leq L(T)\ \ \text{for}\ \  n\ge 1\ \ \text{and all} \ \ t\in [0,T],\\
\end{align*}
(ii) For any $\varepsilon> 0$ there exists $R_\varepsilon>0$ such that for all $t\in[0,T]$\\
\begin{align*}
\sup_{n\ge 1} \left\{ \int_{R_\varepsilon}^{\infty}f^n(x,t)dx \right\}\leq \varepsilon,
\end{align*}
(iii) given $\varepsilon > 0$ there exists  $\delta_\varepsilon>0$ such that, for every measurable set $E$ of $]0,\infty[$ with $|E|\leq \delta_\varepsilon$, $n \ge 1$, and $t\in [0,T]$,\\
\begin{align*}
\int_{E}f^n(x,t)dx< \varepsilon.
\end{align*}
\end{lem}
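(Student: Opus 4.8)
The plan is to derive the three estimates by testing the truncated equation \eqref{trunc1} against well-chosen weight functions and exploiting the structure of the coagulation and fragmentation terms together with Hypotheses (H1)--(H6). Throughout, the starting point is the weak form obtained by multiplying \eqref{trunc1} by a test function $\psi(x)$ and integrating: since $K_n$ and $S_n$ have compact support and $f^n(t)\in X^+$, all manipulations are justified. The key algebraic identity is that the coagulation contribution becomes $\frac12\int_0^\infty\int_0^\infty [\psi(x+y)-\psi(x)-\psi(y)]K_n(x,y)f^n(x,t)f^n(y,t)\,dx\,dy$, while the fragmentation contribution becomes $\int_0^\infty S_n(y) f^n(y,t)\left[\int_0^y b(x,y)\psi(x)\,dx - \psi(y)\right]dy$. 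Part (i) then follows by choosing $\psi(x)=1+x$: the linear part $x$ of $\psi$ is handled by the conservation law \eqref{trunc mass1} (the coagulation bracket vanishes, and the fragmentation bracket vanishes thanks to \eqref{mass1}), so it remains to control $\psi\equiv 1$. For $\psi\equiv 1$ the coagulation bracket is $-1\le 0$ and may be dropped, so $\frac{d}{dt}\int_0^\infty f^n\,dx \le \int_0^\infty S_n(y)f^n(y,t)(N-1)\,dy$ using \eqref{N1}. The growth of $S_n$ is bounded using (H4) and (H6): split the $y$-integral at $y=R$ (for $R\ge 1$ to be fixed), bound $S_n$ on $]0,R[$ by its $L^\infty$ norm via (H6), and on $]R,\infty[$ use $S_n(y)=\int_0^y \frac yx \Gamma(y,x)\,dx$ together with (H4) (and \eqref{mass1} to handle the part $x\in]0,R[$ versus integrating back) to get a bound of the form $S_n(y)\le C(R)(1+y^\theta)$ with $\theta<1$; hence $S_n(y)f^n(y,t)\le C(R)(1+y)f^n(y,t)$, and a Gronwall argument closes (i) with $L(T)$ depending on $T$, $\|f_0\|$, $R$.

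For part (ii), the tail estimate, the plan is to test with $\psi=\vartheta$ where $\vartheta$ is a smooth nonincreasing cut-off-type weight, or more simply to use $\psi(x)=\mathds{1}_{]R,\infty[}(x)$-type weights and exploit that the coagulation bracket is $\le 0$ for such monotone $\psi$ while, for the fragmentation bracket, $\int_0^y b(x,y)\psi(x)\,dx \le N$ by \eqref{N1}. A cleaner route is to note that by \eqref{mass1} and Chebyshev's inequality $\int_R^\infty f^n(x,t)\,dx \le \frac1R\int_0^\infty x f^n(x,t)\,dx \le \frac{\|f_0\|_x}{R}$, which already gives (ii) with $R_\varepsilon = \|f_0\|_x/\varepsilon$ uniformly in $n$ and $t$ — this uses only \eqref{trunc mass1}, so (ii) is the easy part.

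Part (iii), uniform integrability, is the main obstacle and is where Hypotheses (H4)--(H5) are essential, since the fragmentation term can concentrate mass near the origin. The plan is to fix $\varepsilon>0$, use (ii) to choose $R=R_{\varepsilon/2}\ge 1$ so that the tail beyond $R$ contributes at most $\varepsilon/2$, and then for a measurable $E\subset]0,R[$ estimate $\frac{d}{dt}\int_E f^n\,dx$. The gain term from coagulation is controlled by part (i): $\frac12\int\int \mathds{1}_E(x+y)K_n(x,y)f^nf^n \le \frac12 k_1^2(1+n)^{2\mu}\cdots$ — here one must be slightly careful because of the unboundedness of $K$, but since $x+y\in E\subset]0,R[$ forces $x,y<R$, we get $\le \frac12\sup_{]0,R[^2}K\cdot L(T)^2$, a bound independent of $n$ and of $E$ beyond appearing through $\int_E\int f^n$, which in turn is $\le\int_E(\cdots)$; more precisely one bounds this coagulation gain by $C(R,T)\,|E|^{?}$ only after noting $\int_0^\infty f^n(y,t)\,dy\le L(T)$ and $x\in E$, giving $\le C(R,T)\int_E f^n(x-y,t)\cdots$ — the honest statement is that the coagulation contribution to $\frac{d}{dt}\int_E f^n$ is bounded by a constant times $\int_E f^n\,dx$ plus a term that is small with $|E|$. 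The fragmentation gain term $\int_x^\infty b(x,y)S_n(y)f^n(y,t)\,dy$ integrated over $x\in E$ equals $\int_0^\infty S_n(y)f^n(y,t)\left(\int_0^y \mathds{1}_E(x) b(x,y)\,dx\right)dy$; splitting the $y$-integral at $R$ and recalling $b(x,y)=\Gamma(y,x)/S(y)$, the part $y\le R$ is exactly $\int_0^\infty f^n(y,t)\mathds{1}_{]0,R[}(y)\int_0^y\mathds{1}_E(x)\Gamma(y,x)\,dx\,dy \le \omega(R,|E|)\,L(T)$ by (H5), and the part $y>R$ is bounded using (H4), $\int_0^y\mathds{1}_E(x)x^{-1}\Gamma(y,x)\,dx\cdots\le k(R)\,|E|\,R^{-1}\,y^{\theta-1}\cdots$ — again of order $|E|$ times a constant depending on $R,T$. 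Collecting, $\frac{d}{dt}\int_E f^n(x,t)\,dx \le C_1(R,T)\int_E f^n(x,t)\,dx + \Lambda(R,|E|)$ where $\Lambda(R,\delta)\to 0$ as $\delta\to0$ (by the assumption $\lim_{\delta\to0}\omega(R,\delta)=0$ and the explicit $|E|$-factors). Since $\int_E f^n_0\,dx\to 0$ uniformly in $n$ as $|E|\to 0$ (because $f_0\in L^1$, and $f^n_0\le f_0$), Gronwall's lemma on $[0,T]$ yields $\int_E f^n(x,t)\,dx \le e^{C_1(R,T)T}\left(\int_E f_0\,dx + T\,\Lambda(R,|E|)\right)$, and choosing first $R$, then $\delta_\varepsilon$ small enough makes this $<\varepsilon/2$; together with the tail bound this proves (iii). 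The delicate point throughout is keeping all constants independent of $n$, which is guaranteed because every bound reduces — via (ii) and the compact support of $K_n,S_n$ inside the region $]0,R[$ — to quantities controlled by (i) and the $n$-independent hypotheses (H4)--(H6).
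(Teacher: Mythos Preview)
Your plan for (ii) is the paper's argument and is fine. But part (i) has a genuine gap.

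You test \eqref{trunc1} with $\psi\equiv 1$ on all of $]0,\infty[$ and arrive at the need to bound $(N-1)\int_0^n S(y)f^n(y,t)\,dy$, for which you assert that (H4) together with \eqref{mass1} gives $S(y)\le C(R)(1+y^\theta)$ for $y>R$. This is false under the stated hypotheses: (H4) only controls $\Gamma(y,x)$ for $x\in]0,R[$, and there is \emph{no} assumption on $\Gamma(y,x)$ when both $x$ and $y$ are large, so $S(y)=y^{-1}\int_0^y x\,\Gamma(y,x)\,dx$ cannot be bounded this way. In the model example \eqref{volvic} one has $S(y)=y^\gamma$ with $\gamma\in]0,\alpha+2[$, which for $\alpha>0$ can grow faster than any $y^\theta$ with $\theta<1$ (and faster than linearly), so your Gronwall loop on $\int_0^\infty(1+x)f^n\,dx$ does not close. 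The paper avoids this by integrating \eqref{trunc1} only over $]0,1[$: then the fragmentation gain is $\int_0^1\int_x^n \Gamma(y,x)f^n(y,t)\,dy\,dx$, which after Fubini involves $\Gamma(y,x)$ only for $x<1$, exactly the regime covered by (H4) (for $y>1$) and (H5) (for $y<1$). This yields a closed differential inequality for $\int_0^1 f^n\,dx$, and the remainder $\int_1^\infty f^n\,dx\le\int_1^\infty x f^n\,dx$ is handled by \eqref{trunc mass1}.

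There is also a smaller but real issue in (iii). After Fubini the coagulation gain is
\[
\int_0^R f^n(y,t)\int_0^{R-y}\mathds{1}_{(-y+E)\cap]0,R-y[}(x)\,K_n(x,y)\,f^n(x,t)\,dx\,dy,
\]
which involves $f^n$ on the \emph{translated} sets $-y+E$, not on $E$ itself. Thus the inequality you write, $\frac{d}{dt}\int_E f^n\le C_1(R,T)\int_E f^n+\Lambda(R,|E|)$, is not what you actually get; the right-hand side contains $\sup_{|E'|\le|E|}\int_{E'} f^n$ instead of $\int_E f^n$. The paper handles this by setting $p^n(\delta,t)=\sup\{\int_0^R\mathds{1}_E f^n:\ E\subset]0,R[,\ |E|\le\delta\}$, deriving $\frac{d}{dt}\int_E f^n\le C\,p^n(\delta,t)+\omega(R,\delta)+\delta$ for each admissible $E$, integrating in time, and \emph{then} taking the supremum over $E$ to obtain a closed integral inequality for $p^n(\delta,\cdot)$ to which Gronwall applies. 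Your fragmentation estimates via (H4)--(H5) in (iii) are the right ones and match the paper.
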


\begin{proof}
(i) Let $n\ge 1$ and $t\in [0,T]$. Integrating (\ref{trunc1}) with respect to $x$ over $]0,1[$ and using Fubini's Theorem, we have
\begin{align*}
 \frac{d}{dt} \int_{0}^{1}f^n(x,t)dx = & -\frac{1}{2} \int_{0}^{1}\int_{0}^{1-x} K_n(x,y)f^n(x,t)f^n(y,t)dydx \\
&- \int_{0}^{1}\int_{1-x}^{n-x} K_n(x,y)f^n(x,t)f^n(y,t)dydx \\
&+\int_{0}^{1}\int_{x}^{n}b(x,y) S(y)f^n(y,t)dydx - \int_{0}^{1}S(x)f^n(x,t)dx.
\end{align*}
Since $K_n$, $f^n$, and $S$ are non-negative and $\Gamma$ satisfies (\ref{Selection rate defi1}), we have
\begin{align*}
\frac{d}{dt} \int_{0}^{1}f^n(x,t)dx \leq & \int_{0}^{1}\int_{x}^{n}b(x,y) S(y)f^n(y,t)dydx \\
                        = & \int_{0}^{1}\int_{x}^{1}\Gamma(y,x) f^n(y,t)dydx + \int_{0}^{1}\int_{1}^{n} \Gamma(y,x) f^n(y,t)dydx,
\end{align*}
Using Fubini's Theorem and (H5) (with $R=1$ and $E=]0,1[$) in the first term of the right-hand side and (H4) (with $R=1$) in the second one, we obtain
\begin{align}
 \frac{d}{dt} \int_{0}^{1}f^n(x,t)dx \leq&\int_{0}^{1}f^n(y,t)\int_{0}^{y}\Gamma(y,x) dxdy + k(1)\int_{0}^{1}\int_{1}^{n} y f^n(y,t)dydx \nonumber\\
\leq & \omega(1,1)\ \int_{0}^{1} f^n(x,t)dx+ k(1)\ \|f^n(t)\|_x .\label{big1}
\end{align}
Recalling that $\|f^n(t)\|_x=\|f^n(0)\|_x\leq \|f_0\|$ for $t\ge 0$ by (\ref{trunc mass1}), we readily deduce from (\ref{big1}) that
\begin{align*}
\frac{d}{dt}\int_{0}^{1}f^n(x,t)dx &\leq\omega(1,1) \int_{0}^{1}f^n(y,t)dy+ k(1)\|f_0\|.
\end{align*}
Integrating with respect to time, we end up with
\begin{align*}
\int_{0}^{1}f^n(x,t)dx \leq \|f_0\| \left( 1 + \frac{k(1)}{\omega(1,1)} \right) \exp(\omega(1,1)t), \ \ t\in [0,T].
\end{align*}
Using (\ref{trunc mass1}) again we may estimate
\begin{align*}
\int_{0}^{\infty}(1+x)f^n(x,t)dx &= \int_{0}^{1}f^n(x,t)dx+\int_{1}^{n}f^n(x,t)dx+\int_{0}^{n}xf^n(x,t)dx\\
&\leq \int_{0}^{1}f^n(x,t)dx+\int_{1}^{n}xf^n(x,t)dx+ \|f_0\|\\
&\leq \|f_0\|\ \left[ \left( 1 + \frac{k(1)}{\omega(1,1)} \right) \exp(\omega(1,1)T)+2 \right]=:L(T).
\end{align*}

\medskip

(ii) For $\varepsilon> 0$, set $R_\varepsilon:=\|f_0\|/ \varepsilon$. Then, by (\ref{trunc mass1}), for each $n\ge 1$ and for all $t\in[0,T]$ we have
$$
\int_{R_\varepsilon}^{\infty}f^n(x,t)dx \leq \frac{1}{R_\varepsilon}\int_{R_\varepsilon}^{\infty}xf^n(x,t)dx \leq \frac{\|f_0\|}{R_\varepsilon}< \varepsilon.
$$

\medskip

(iii) Fix $R>0$. For $n\ge 1$, $\delta\in (0,1)$, and $t\in [0,T]$, we define
\begin{align*}
p^n(\delta,t)=\sup \left\{\int_{0}^{R} \mathds{1}_{E}(x)f^n(x,t)dx\ :\ E\subset ]0,R[ \ \ \text{and} \ \
|E|\leq \delta \right\}.
\end{align*}

Consider a measurable subset $E\subset ]0,R[$ with $|E|\leq \delta$. For $n\ge 1$ and $t\in [0,T]$, it follows from the non-negativity of $f^n$, (\ref{Selection  rate defi1}) and (\ref{trunc1})-(\ref{trunc in1}) that

\begin{equation}
\frac{d}{dt} \int_{0}^{R} \mathds{1}_{E}(x) f^n(x,t)dx \leq \frac{1}{2}\ I_1^n(t) + I_2^n(t) + I_3^n(t), \label{evian}
\end{equation}
where
\begin{align*}
I_1^n(t) := & \int_{0}^{R}\mathds{1}_{E}(x)\int_{0}^{x} K_n(x-y,y)f^n(x-y,t)f^n(y,t)dydx , \\
I_2^n(t) := & \int_{0}^{R}\mathds{1}_{E}(x)\int_{x}^{R} \Gamma(y,x) f^n(y,t)dydx , \\
I_3^n(t) := & \int_{0}^{R}\mathds{1}_{E}(x)\int_{R}^{\infty} \Gamma(y,x) f^n(y,t)dydx .
\end{align*}
First, applying Fubini's Theorem to $I_1^n(t)$ gives
\begin{align*}
I_1^n(t) = & \int_{0}^{R} f^n(y,t) \int_{y}^{R}\mathds{1}_{E}(x)K_n(y,x-y)f^n(x-y,t)dxdy \\
= & \int_{0}^{R}f^n(y,t)\int_{0}^{R-y}\mathds{1}_{E}(x+y)K_n(y,x)f^n(x,t)dxdy.
\end{align*}
Setting $-y+ E := \{z>0\ :\ z=-y+x \ \ \text{for some}\ \ x\in E\}$, it follows from (H2) and the above identity that
$$
I_1^n(t) \le k_1^2 (1+R)^{\mu} \int_{0}^{R}(1+y)^{\mu}f^n(y,t) \int_{0}^{R} f^n(x,t)\mathds{1}_{{-y+E}\cap ]0,R-y[}(x)dxdy.
$$
Since ${-y+E}\cap ]0,R-y[\subset ]0,R[$ and $|{-y+E}\cap ]0,R-y[|\le|-y+E|=|E|\le\delta$, we infer from the definition of
$p^n(\delta,t)$ and Lemma~\ref{compactness1}~(i) that
$$
I_1^n(t)\le k_1^2 (1+R)^{\mu} \left( \int_{0}^{R}(1+y)^{\mu}f^n(y,t) dy \right)\ p^n(\delta,t) \le k_1^2 L(T) (1+R)^{\mu} p^n(\delta,t).$$
Next, applying Fubini's Theorem to $I_2^n(t)$ and using (H5) and Lemma~\ref{compactness1}~(i) give
\begin{align*}
I_2^n(t) =& \int_{0}^{R}f^n(y,t)\int_{0}^{y}\mathds{1}_{E}(x)\Gamma(y,x)dxdy \leq \omega(R,|E|)\ \int_{0}^{R} f^n(y,t)dy \leq L(T) \omega(R,|E|).
\end{align*}
Finally, owing to (H4) and (\ref{trunc mass1}), we have
\begin{align*}
I_3^n(t) \leq & k(R)\ \int_{0}^{R} \int_{R}^{\infty} \mathds{1}_{E}(x) y^\theta f^n(y,t) dydx \leq k(R) R^{\theta-1}\ |E|\ \int_{R}^{\infty} y f^n(y,t)dy \\
\leq & k(R) R^{\theta-1}\ \|f_0\|\ |E| \leq k(R) R^{\theta-1}\ \|f_0\|\ \delta.
\end{align*}
Collecting the estimates on $I_j^n(t)$, $1\le j \le 3$, we infer from (\ref{evian}) that there is $C_1(R,T)>0$ such that
$$
\frac{d}{dt} \int_{0}^{R} \mathds{1}_{E}(x)f^n(x,t)dx \leq C_1(R,T)\ \left( p^n(\delta,t) + \omega(R,\delta)+ \delta \right).
$$
Integrating with respect to time and taking the supremum over all $E$ such that $E\subset ]0,R[$ with $|E|\leq \delta$ give
$$
p^n(\delta,t) \leq p^n(\delta,0) + T C_1(R,T) [\omega(R,\delta)+\delta]+C_1(R,T)\int_{0}^{t}p^n(\delta,s)ds, \qquad t\in [0,T].
$$
By Gronwall's inequality (see e.g. \cite[p. 310]{Walter:1998}), we obtain
\begin{align}\label{main221}
p^n(\delta,t)\leq \left[ p^n(\delta,0)+ T C_1(R,T) (\omega(R,\delta)+\delta) \right] \exp{\{C_1(R,T)t\}}, \qquad t\in [0,T].
\end{align}
Now, since $f^n(x,0)\le f_0(x)$ for $x>0$, the absolute continuity of the integral guarantees that $\sup_{n}\{p^n(\delta,0)\} \to 0$ as $\delta \to 0$ which implies, together with (H5) and (\ref{main221}) that
$$
\lim_{\delta\to 0} \sup_{n\ge 1, t\in [0,T]}{\{p^n(\delta,t)\}} =0.
$$
Lemma~\ref{compactness1}~(iii) is then a straightforward consequence of this property and Lemma~\ref{compactness1}~(i).
\end{proof}

\medskip

Lemma~\ref{compactness1} and the \textit{Dunford-Pettis Theorem} imply that, for each $t\in[0,T]$, the sequence of functions $(f^n(t))_{n\ge 1}$ lies in a weakly relatively compact set of $L^1]0,\infty[$ which does not depend on $t\in [0,T]$.

\subsection{Equicontinuity in time}\label{subs:equit}

Now we proceed to show the time equicontinuity of the sequence $(f^n)_{n\in \mathds{N}}$. Though the coagulation terms can be handled as in \cite{GIRI:2010EXT,Laurencot:2000,Stewart:1990I}, we sketch the proof below for the sake of completeness. Let $T>0$, $\varepsilon > 0$, and $\phi \in L^{\infty}]0,\infty[$ and consider $s,t\in [0,T]$ with $t\geq s$. Fix $R>1$ such that
\begin{align}\label{m1}
\frac{2L(T)}{R}< \frac{\varepsilon}{2},
\end{align}
the constant $L(T)$ being defined in Lemma~\ref{compactness1}~(i). For each $n$, by Lemma~\ref{compactness1}~(i),
\begin{align}\label{mm1}
\int_{R}^{\infty}|f^n(x,t)-f^n(x,s)|dx\leq \frac{1}{R}\int_{R}^{\infty}x\{f^n(x,t)+f^n(x,s)\}dx\leq \frac{2L(T)}{R}.
\end{align}
By (\ref{trunc1}), (\ref{m1}), and (\ref{mm1}), we get
\begin{align}\label{equicontinuity1}
& \left| \int_{0}^{\infty}\phi(x)\{f^n(x,t)-f^n(x,s)\}dx \right| \nonumber\\
\leq & \left| \int_{0}^{R}\phi(x)\{f^n(x,t)-f^n(x,s)\}dx \right| + \int_{R}^{\infty}|\phi(x)||f^n(x,t)-f^n(x,s)|dx \nonumber\\
\leq & \|\phi\|_{L^{\infty}} \int_{s}^{t} \left[ \frac{1}{2}\int_{0}^{R}\int_{0}^{x} K_n(x-y,y)f^n(x-y,\tau)f^n(y,\tau)dydx \right. \nonumber\\
&+\int_{0}^{R} \int_{0}^{n-x} K_n(x,y)f^n(x,\tau)f^n(y,\tau)dydx +\int_{0}^{R}\int_{x}^{n}b(x,y) S(y)f^n(y,\tau)dydx\nonumber\\
& \left. +\int_{0}^{R}S(x)f^n(x,\tau)dx \right] d{\tau} + \|\phi\|_{L^{\infty}}\frac{\varepsilon}{2}.
\end{align}

By Fubini's Theorem, (H2), and Lemma~\ref{compactness1}~(i), the first term of the right-hand side of (\ref{equicontinuity1}) may be estimated as follows:
\begin{align*}
\frac{1}{2}\int_{0}^{R}\int_{0}^{x}K_n(x-y,y)&f^n(x-y,\tau)f^n(y,\tau)dy dx\\
&=\frac{1}{2}\int_{0}^{R}\int_{y}^{R} K_n(x-y,y)f^n(x-y,\tau)f^n(y,\tau)dx dy \\
&=\frac{1}{2}\int_{0}^{R}\int_{0}^{R-y}K_n(x,y)f^n(x,\tau)f^n(y,\tau)dx dy \\
&\leq \frac{k_{1}^2}{2}\int_{0}^{R}\int_{0}^{R-y}(1+x)^{\mu}(1+y)^{\mu}f^n(x,\tau)f^n(y,\tau)dy dx\\
&\leq \frac{k_{1}^2 L(T)^2}{2}.
\end{align*}
Similarly, for the second term of the right-hand side of (\ref{equicontinuity1}), it follows from (H2) that
\begin{align*}
\int_{0}^{R}\int_{0}^{n-x}K_n(x,y)f^n(x,\tau)f^n(y,\tau)dy dx &\leq k_{1}^2\int_{0}^{R}\int_{0}^{n-x}(1+x)^{\mu}(1+y)^{\mu}f^n(x,\tau)f^n(y,\tau)dy dx \\
&\leq k_{1}^2 L(T)^2.
\end{align*}
For the third term of the right-hand side of (\ref{equicontinuity1}), we use Fubini's Theorem, (H4), (H5), and Lemma~\ref{compactness1}~(i) to obtain
\begin{align*}
\int_{0}^{R}\int_{x}^{n}b(x,y) &S(y)f^n(y,\tau)dydx \\
\le & \int_{0}^{R}\int_{0}^{y}\Gamma(y,x)f^n(y,\tau)dxdy +\int_{0}^{R}\int_{R}^{\infty}\Gamma(y,x)f^n(y,\tau)dydx\\
\leq & \int_{0}^{R}f^n(y,\tau)\int_{0}^{y}\mathds{1}_{]0,R[}(x)\Gamma(y,x)dxdy + k(R)\ \int_{0}^{R}\int_{R}^{\infty} y^\theta f^n(y,\tau)dydx\\
\leq &\omega(R,R) \int_{0}^{R}f^n(y,\tau)dy+ k(R) \int_{0}^{R}\int_{R}^{\infty} yf^n(y,\tau)dydx\\
\leq & [\omega(R,R) + R k(R)]\  L(T).
\end{align*}
Finally, the fourth term of the right-hand side of (\ref{equicontinuity1}) is estimated with the help of (H6) and Lemma~\ref{compactness1}~(i)  and we get
\begin{align*}
\int_{0}^{R}S(x)f^n(x,t)dx \leq \|S\|_{L^{\infty}]0,R[} L(T).
\end{align*}
Collecting the above estimates and setting
$$
C_2(R,T) = \frac{3k_{1}^2 L(T)^2}{2}+\left\{ \omega(R,R) +  R k(R)+\|S\|_{L^{\infty}]0,R[} \right\}\ L(T)
$$
the inequality (\ref{equicontinuity1}) reduces to
\begin{equation}\label{finalequi1}
\left| \int_{0}^{\infty}\phi(x)\{f^n(x,t)-f^n(x,s)\} dx \right| \leq C_2(R,T)\ \|\phi\|_{L^{\infty}}\ (t-s) + \|\phi\|_{L^{\infty}}\frac{\varepsilon}{2}<\|\phi\|_{L^{\infty}}\varepsilon,
\end{equation}
whenever $t-s<\delta$ for some suitably small $\delta>0$. The estimate (\ref{finalequi1}) implies the time equicontinuity of the family $\{f^n(t), t\in[0,T]\}$ in $L^1]0,\infty[$. Thus, according to a refined version of the \textit{Arzel\`{a}-Ascoli Theorem}, see \cite[Theorem 2.1]{Stewart:1990I}, we conclude that there exist a subsequence (${f^{n_k}}$) and a non-negative function $f\in L^\infty(]0,T[;L^1]0,\infty[)$ such that
\begin{equation}
\lim_{n_k\to\infty} \sup_{t\in [0,T]}{\left\{ \left| \int_0^\infty  \left\{ f^{n_k}(x,t) - f(x,t) \right\}\ \phi(x)\ dx \right| \right\}} = 0, \label{vittel}
\end{equation}
for all $T>0$ and $\phi \in L^\infty]0,\infty[$. In particular, it follows from the non-negativity of $f^n$ and $f$, (\ref{trunc mass1}), and (\ref{vittel}) that, for $t\ge 0$ and $R>0$,
$$
\int_{0}^{R}x f(x,t)dx=\lim_{n_k\to \infty}\int_{0}^{R}x f^{n_k}(x,t)dx\leq \|f_0\|_x< \infty.
$$
Letting $R\to \infty$ implies that $\|f(t)\|_x\le \|f_0\|_x$ and thus $f(t)\in X^+$.

\subsection{Passing to the limit}\label{subs:limit}

Now we have to show that the limit function $f$ obtained in (\ref{vittel}) is actually a weak solution to (\ref{cfe1})-(\ref{in1}).
To this end, we shall use weak continuity and convergence properties of some operators which define now: for $g\in X^+$, $n\ge 1$, and $x\in ]0,\infty[$, we put
\begin{align*}
Q_{1}^n(g)(x)=\frac{1}{2}\int_{0}^{x}K_n(x-y,y)g(x-y)g(y)dy,\ \ & \ \ Q_{2}^n(g)(x)=\int_{0}^{n-x}K_n(x,y)g(x)g(y)dy,\\
Q_{1}(g)(x)=\frac{1}{2}\int_{0}^{x}K(x-y,y)g(x-y)g(y)dy, \ \ &\ \  Q_{2}(g)(x)=\int_{0}^{\infty}K(x,y)g(x)g(y)dy,\\
Q_{3}(g)(x)=S(x)g(x),\ \ & \ \ Q_{4}(g)(x)=\int_{x}^{\infty}b(x,y)S(y)g(y)dy,
\end{align*}
and $Q^n=Q_1^n-Q_2^n-Q_3+Q_4$, $Q=Q_1-Q_2-Q_3+Q_4$.

We then have the following result:

\begin{lem}\label{convergence lemma1}
Let $(g^{n})_{n\in \mathds{N}}$ be a bounded sequence in $X^+$, $||g^n||\leq L$, and  $g\in X^+$ such that $g^n\rightharpoonup g$ in $L^1]0,\infty[$ as $n\to \infty $. Then, for each $R> 0$ and $i\in\{1,\ldots,4\}$, we have
\begin{equation}
Q_i^n(g^n)\rightharpoonup Q_i(g)\ \ \text{in} \ \ L^1]0,R[\ \ \text{as}\ \ n\to \infty. \label{luchon}
\end{equation}
\end{lem}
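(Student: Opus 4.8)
The plan is to fix $R>0$ and a test function $\psi\in L^\infty]0,R[$ and to show that $\int_0^R\psi(x)Q_i^n(g^n)(x)\,dx\to\int_0^R\psi(x)Q_i(g)(x)\,dx$ for each $i\in\{1,\dots,4\}$. Three facts will be used throughout: (a) $g^n\rightharpoonup g$ in $L^1]0,\infty[$ gives $\int_0^\infty\chi\,g^n\to\int_0^\infty\chi\,g$ for every $\chi\in L^\infty]0,\infty[$; (b) by the Dunford--Pettis theorem a weakly convergent sequence in $L^1$ is uniformly integrable, so $\sup_n\int_E g^n\to0$ as $|E|\to0$ for $E\subset]0,R[$; and (c) since $\mu<1$ and $\theta<1$, the bound $\|g^n\|\le L$ yields $\int_A^\infty(1+y)^\mu g^n(y)\,dy\le(1+A)^{\mu-1}L$ and $\int_A^\infty y^\theta g^n(y)\,dy\le A^{\theta-1}L$, which tend to $0$ as $A\to\infty$ uniformly in $n$ (and likewise for $g$, whose moments are finite and bounded by $L$ via weak lower semicontinuity). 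Since $K_n=K$ and $S_n=S$ coincide with $K$, $S$ on the bounded sets that enter $Q_i^n$ once $n>R$, we may assume $n$ large. The operator $Q_3$ is immediate: $\psi S\in L^\infty]0,R[$ by (H6), so $\int_0^R\psi S g^n\to\int_0^R\psi S g$ by (a).

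\textbf{The operator $Q_4$.} By Fubini's theorem (legitimate since the integrand is non-negative and the double integral is finite by (H4)--(H6)), $\int_0^R\psi\,Q_4(g^n)=\int_0^\infty g^n(y)\,\Psi(y)\,dy$ with $\Psi(y):=S(y)\int_0^{\min\{y,R\}}\psi(x)b(x,y)\,dx$, recalling $b(x,y)S(y)=\Gamma(y,x)$. For $y<A$ one has $|\Psi(y)|\le N\,\|\psi\|_\infty\,\|S\|_{L^\infty]0,A[}$, so $\Psi\,\mathds{1}_{]0,A[}\in L^\infty]0,\infty[$; for $y>R$, $|\Psi(y)|\le\|\psi\|_\infty\int_0^R\Gamma(y,x)\,dx\le R\,k(R)\,\|\psi\|_\infty\,y^\theta$ by (H4). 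Splitting $\int_0^\infty=\int_0^A+\int_A^\infty$, the first piece converges to $\int_0^A g\,\Psi$ by (a), while by (c) both tails are $\le R\,k(R)\,\|\psi\|_\infty\,A^{\theta-1}L$, uniformly in $n$; letting $A\to\infty$ gives the claim for $Q_4$.

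\textbf{The coagulation operators $Q_2^n$ and $Q_1^n$.} Write $Q_2^n(g^n)(x)=g^n(x)M^n(x)$ with $M^n(x):=\int_0^{n-x}K(x,y)g^n(y)\,dy$, and put $M(x):=\int_0^\infty K(x,y)g(y)\,dy$, so $Q_2(g)=gM$. By (H2), $M^n$ and $M$ are bounded on $]0,R[$ by $k_1^2(1+R)^\mu L$ (using $(1+y)^\mu\le1+y$); moreover, for fixed $x$, $\int_0^AK(x,y)g^n(y)\,dy\to\int_0^AK(x,y)g(y)\,dy$ by (a) since $K(x,\cdot)\mathds{1}_{]0,A[}$ is bounded, while the remainder is $\le\phi(x)k_1(1+A)^{\mu-1}L$ by (c), so $M^n(x)\to M(x)$ for every $x$. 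Now decompose
\[
\int_0^R\psi\,g^nM^n-\int_0^R\psi\,gM=\int_0^R\psi\,g^n(M^n-M)+\int_0^R\psi\,M\,(g^n-g).
\]
The last integral tends to $0$ because $\psi M\in L^\infty]0,R[$. For the first, bounded pointwise convergence $M^n\to M$ on the finite interval $]0,R[$ gives convergence in measure there; given $\sigma>0$, split $]0,R[$ into $\{|M^n-M|\le\sigma\}$ and $\{|M^n-M|>\sigma\}$, bound $\int_0^Rg^n\le L$ on the first, and use the uniform boundedness of $M^n,M$ together with the uniform integrability (b) applied to $\{|M^n-M|>\sigma\}$ (whose measure vanishes) on the second, to get $\int_0^Rg^n|M^n-M|\to0$. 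Hence $Q_2^n(g^n)\rightharpoonup Q_2(g)$ in $L^1]0,R[$. For $Q_1^n$, Fubini's theorem gives, for $n>R$, $\int_0^R\psi(x)Q_1^n(g^n)(x)\,dx=\tfrac12\int_0^Rg^n(y)N^n(y)\,dy$ with $N^n(y):=\int_0^{R-y}\psi(u+y)K(u,y)g^n(u)\,du$; since the kernel $u\mapsto\psi(u+y)K(u,y)\mathds{1}_{]0,R-y[}(u)$ is bounded by $\|\psi\|_\infty k_1^2(1+R)^{2\mu}$ and compactly supported, $N^n$ is uniformly bounded on $]0,R[$ and $N^n(y)\to N(y):=\int_0^{R-y}\psi(u+y)K(u,y)g(u)\,du$ for every $y$ by (a). The very same two-term decomposition as for $Q_2^n$ then yields $\tfrac12\int_0^Rg^nN^n\to\tfrac12\int_0^RgN$, and undoing Fubini identifies the limit with $\int_0^R\psi\,Q_1(g)$.

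\textbf{Main obstacle.} The delicate point is that $Q_1^n$ and $Q_2^n$ are \emph{quadratic} in $g^n$, and products of weakly convergent $L^1$-sequences need not converge weakly. The mechanism that resolves this is that, after Fubini's theorem isolates one factor $g^n$, the remaining factor ($M^n$ for $Q_2^n$, $N^n$ for $Q_1^n$) converges \emph{pointwise and boundedly} on the bounded set $]0,R[$ — because $K$ is bounded on bounded sets and its large-volume tail is uniformly small thanks to $\mu<1$ — and this, combined with the uniform integrability of $(g^n)$ supplied by the Dunford--Pettis theorem, is exactly what is needed to pass to the limit in the product. Establishing this pointwise-plus-bounded convergence, and organizing the $\sigma$-splitting that turns it together with uniform integrability into $L^1$-type control of $\int_0^Rg^n|M^n-M|$, is the technical heart of the argument.
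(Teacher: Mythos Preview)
Your proof is correct and follows essentially the same route as the paper. For $i=3,4$ your argument coincides with the paper's (you use a two-piece split $]0,A[\cup[A,\infty[$ for $Q_4$, invoking (H6) on $]0,A[$, whereas the paper uses the three pieces $]0,R[\cup[R,r[\cup[r,\infty[$, but the mechanism---bounded test function on the compact part, tail control via (H4) and $\theta<1$---is identical); for $i=1,2$ the paper simply refers to \cite{GIRI:2010EXT,Stewart:1990I}, and the explicit argument you supply (Fubini to isolate one factor $g^n$, pointwise bounded convergence of the inner integral, then uniform integrability from Dunford--Pettis to pass to the limit in the product) is precisely the standard scheme used in those references.
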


\begin{proof}
The proof of (\ref{luchon}) for $i=1,2$ is the same as that in \cite{GIRI:2010EXT,Stewart:1990I} to which we refer. The case $i=3$ is obvious since $\phi S$ belongs to $L^\infty]0,R[$ by (H6) and (\ref{luchon}) follows at once from the weak convergence of $(g^n)$ in $L^1]0,\infty[$. For $i=4$, we consider $\phi \in L^{\infty}]0,R[$ and use (\ref{Selection rate defi1}) and Fubini's Theorem to compute, for $r>R$,
\begin{align*}
\left| \int_{0}^{R}\phi(x) \{Q_4(g^n)(x)-Q_4(g)(x)\} dx\right| = & \left| \int_{0}^{R}\int_{x}^{\infty}\phi(x)S(y)b(x,y)\{g^n(y)-g(y)\}dydx \right| \\
\leq &\left| \int_{0}^{R}\int_{0}^{y}\phi(x)S(y)b(x,y) \{g^n(y)-g(y)\}dxdy \right| \\
& + \left| \int_{R}^{\infty}\int_{0}^{R}\phi(x)\Gamma(y,x)\{g^n(y)-g(y)\}dxdy \right|.
\end{align*}
This can be further written as
\begin{equation}\label{defj}
\left| \int_{0}^{R}\phi(x) \{Q_4(g^n)(x)-Q_4(g)(x)\} dx\right| = J_1^n + J_2^n(r) + J_3^n(r),
\end{equation}
with
\begin{align*}
J_1^n = & \left| \int_{0}^{R}\{g^n(y)-g(y)\}\int_{0}^{y}\phi(x)S(y)b(x,y) dxdy \right| \\
J_2^n(r) = & \left| \int_{R}^{r}\{g^n(y)-g(y)\}\int_{0}^{R}\phi(x)\Gamma(y,x)dxdy \right| \\
J_3^n(r) = & \left| \int_{r}^{\infty}\{g^n(y)-g(y)\}\int_{0}^{R}\phi(x)\Gamma(y,x)dxdy \right|.
\end{align*}

We use (H6) and (\ref{N1}) to observe that, for $y \in ]0,R[$,
\begin{align*}
\left| \int_{0}^{y}\phi(x) S(y) b(x,y) dx \right| &\leq \|S\|_{L^{\infty}]0,R[} \|\phi\|_{L^{\infty}]0,R[} \int_{0}^{y}b(x,y)dx\\
& \leq N \|S\|_{L^{\infty}]0,R[} \|\phi\|_{L^{\infty}]0,R[}.
\end{align*}
This shows that the function $y\mapsto \int_{0}^{y}\phi(x)\Gamma(y,x)dx$ belongs to $L^{\infty}]0,R[$. Since $g^n\rightharpoonup g$ in $L^1]0,\infty[$ as $n \to \infty$, it thus follows that
\begin{align}\label{j1}
\lim_{n \to \infty} J^n_1=0.
\end{align}
We next infer from (H4) that, for $y \in ]0,R[$,
\begin{equation}
\left| \int_0^R \phi(x) \Gamma(y,x) dx \right| \le k(R)\ y^\theta\ \int_0^R \phi(x)\ dx \le R k(R) \|\phi\|_{L^\infty]0,R[}\ y^\theta. \label{alet}
\end{equation}
On the one hand, (\ref{alet}) guarantees that the function $y\mapsto \int_{0}^{R}\phi(x)\Gamma(y,x)dx$ belongs to $L^{\infty}]R,r[$ and the weak convergence of $(g^n)$ to $g$ in $L^1]0,\infty[$ entails that
\begin{align}\label{j2}
\lim_{n \to \infty} J^n_2(r)=0  \ \ \text{ for all }\ \ r>R.
\end{align}
On the other hand, we deduce from (\ref{alet}) and the boundedness of $(g^n)$ and $g$ in $X^+$ that
\begin{align*}
\left| \int_{r}^{\infty}\{g^n(y)-g(y)\}\int_{0}^{R} \phi(x)\Gamma(y,x)dxdy \right| \leq & Rk(R) \|\phi\|_{L^{\infty}]0,R[}\int_{r}^{\infty} y^{\theta}\{g^n(y)+g(y)\}dy\\
\leq & \frac{R k(R) (L+\|g\|)}{r^{1-\theta}} \|\phi\|_{L^{\infty}]0,R[}
\end{align*}
which is asymptotically small (as $r \to \infty$) uniformly with respect to $n$. We thus conclude that
\begin{align}\label{j3}
\lim_{r \to \infty} \sup_{n\ge 1}\{J^n_3(r)\}=0.
\end{align}
Substituting (\ref{j1}) and (\ref{j2}) into (\ref{defj}), we obtain
$$
\limsup_{n\to\infty} \left| \int_{0}^{R}\phi(x)\{Q_4(g^n)(x)-Q_4(g)(x)\}dx \right| \le \sup_{n\ge 1}\{J^n_3(r)\}
$$
for all $r>R$. Owing to (\ref{j3}), we may let $r\to\infty$ and conclude that (\ref{luchon}) holds true for $i=4$ thanks to the arbitrariness of $\phi$ and the proof of Lemma~\ref{convergence lemma1} is complete.
\end{proof}

\subsection{Existence}\label{subs:exist}
Now we are in a position to prove the main result.

\begin{proof}[Proof of Theorem~\ref{existmain theorem1}]
Fix $R>0$, $T>0$, and consider $t\in [0,T]$ and $\phi\in L^{\infty}]0,R[$. Owing to Lemma~\ref{convergence lemma1}, we have for each $s\in[0,t]$,
\begin{align}\label{main convergence1}
\int_{0}^{R}\phi(x)\{Q^{n_k}(f^{n_k}(s))(x)-Q(f(s))(x)\}dx \to 0 \ \ as \ \ n_k\to \infty.
\end{align}
Arguing as in Section~\ref{subs:equit}, it follows from (H2), (H4)--(H6), and Lemma~\ref{compactness1}~(i) that there is $C_3(R,T)>0$ such that, for $n\ge 1$, and $s\in [0,t]$, we have
\begin{equation}\label{bound on the diff1}
\left| \int_{0}^{R} \phi(x) Q^n(f^n(s))(x) dx\right| \le C_3(R,T)\ \|\phi\|_{L^{\infty}]0,R[}.
\end{equation}
Since the right-hand side of (\ref{bound on the diff1}) is in $L^1]0,t[$, it follows from (\ref{main convergence1}), (\ref{bound on the diff1}) and the dominated convergence theorem that
\begin{align}\label{main convergence with t1}
\left| \int_{0}^{t}\int_{0}^{R}\phi(x)\{Q^{n_k}(f^{n_k}(s))(x)-Q(f(s))(x)\}dx ds \right| \to 0 \ \ \text{as} \ \ n_k\to \infty.
\end{align}
Since $\phi$ is arbitrary in $L^\infty]0,R[$,  Fubini's Theorem and (\ref{main convergence with t1}) give
\begin{align}\label{conv of Q with t1}
\int_{0}^{t}Q^{n_k}(f^{n_k}(s))ds\rightharpoonup \int_{0}^{t}Q(f(s))ds\ \ \text{in} \ \ L^1]0,R[\ \ \text{as}\ \ n_k\to \infty.
\end{align}
It is then straightforward to pass to the limit as $n_k\to\infty$ in (\ref{trunc1})-(\ref{trunc in1}) and conclude that  $f$ is a solution to (\ref{cfe1})-(\ref{in1}) on $[0,\infty[$ (since $T$ is arbitrary). This completes the proof of Theorem~\ref{existmain theorem1}.
\end{proof}

\begin{rmkk}\label{re:N}
It is worth pointing out that the assumption (\ref{N1}) $\int_0^y b(x,y) dx = N$ is only used to prove (\ref{j1}) and it is clear from that proof that the assumption
$$
\sup_{y\in ]0,R[} \int_0^y b(x,y) dx < \infty \ \ \text{ for all } \ \ R>0
$$
is sufficient. Thus, Theorem~\ref{existmain theorem1} is actually valid under this weaker assumption.
\end{rmkk}

\section*{Acknowledgments}

A.K. Giri would like to thank International Max-Planck Research School, Magdeburg, Germany and FWF Austrian Science Fund for their support. Part of this work was done while Ph. Lauren\c{c}ot enjoys the hospitality and support of the Isaac Newton Institute for Mathematical Sciences, Cambridge, UK. 

\bibliographystyle{plain}


\end{document}